\documentclass[11pt]{amsart}
\usepackage{amssymb}
\usepackage{times}
\usepackage{url}
\setlength\textwidth{36.3pc}  \setlength\textheight{53pc}
\setlength\oddsidemargin{16pt} \setlength\evensidemargin{16pt}
\raggedbottom \relpenalty=10000 \binoppenalty=10000 \tolerance=500

\newtheorem{theorem}{Theorem}[section]
\newtheorem{lemma}[theorem]{Lemma}
\newtheorem{proposition}[theorem]{Proposition}
\newtheorem{corollary}[theorem]{Corollary}
\theoremstyle{remark}
\newtheorem{remark}[theorem]{Remark}

\newcommand{\Z}{\mathbb{Z}}
\newcommand{\Q}{\mathbb{Q}}
\newcommand{\C}{\mathbb{C}}

\newcommand{\GL}{\mathrm{GL}}
\newcommand{\SL}{\mathrm{SL}}

\newcommand{\glnz}{\mathrm{GL}(n,\Z)}
\newcommand{\glnq}{\mathrm{GL}(n,\Q)}
\newcommand{\glnc}{\mathrm{GL}(n,\C)}
\newcommand{\glnr}{\mathrm{GL}(n,R)}
\newcommand{\glnp}{\mathrm{GL}(n,p)}

\newcommand{\hi}{\mathrm{h}}
\newcommand{\fis}{\leq_f \!}
\newcommand{\gpess}{\langle S \hspace{1pt} \rangle}
\newcommand{\abk}{\allowbreak}
\newcommand{\gl}{\mathfrak{\mathop{gl}}}
\newcommand{\mf}[1]{\mathfrak{#1}}
\newcommand{\g}{\mathfrak{g}}
\newcommand{\h}{\mathfrak{h}}


\begin{document}

\title{Integrality and arithmeticity of solvable linear groups}

\date{\today}

\begin{abstract}
We develop a practical algorithm to decide whether a finitely
generated subgroup of a solvable algebraic group $G$ is
arithmetic. This incorporates a procedure to compute a generating
set of an arithmetic subgroup of $G$. We also provide a simple new
algorithm for integrality testing of finitely generated
solvable-by-finite linear groups over the rational field. The
algorithms have been implemented in {\sc Magma}.
\end{abstract}

\author{W.~A. de Graaf, A.~S. Detinko and D.~L. Flannery}

\address{W.~A. de Graaf, Department of Mathematics,
University of Trento, Italy}

\address{A.~S. Detinko and D.~L. Flannery,
Department of Mathematics, National University of Ireland, Galway,
Ireland}

\maketitle

\section{Introduction}

For $K\leq \glnc$ and a subring $R$ of $\C$, denote $K\cap \glnr$
by $K_R$. Let $G$ be an algebraic group defined over $\Q$. This
paper is concerned with the question
\[
\text{Given a finitely generated subgroup $H$ of $G_\Q$, is $H$ an
arithmetic subgroup of $G$?} \tag{$\ast$}
\]
Recall that $H\leq G_\Q$ is an arithmetic subgroup of $G$ if it is
commensurable with $G_\Z$, i.e., $H \cap \abk G_\Z= H_\Z$ has
finite index in both $H$ and $G_\Z$. (More generally, a matrix
group is said to be arithmetic if it is an arithmetic subgroup of
some algebraic $\Q$-group \cite[p.~119]{Segal}.) The significance
of $(\ast)$ is evidenced by, e.g., \cite{Sarnak}. Decidability of
($\ast$) has not been settled. However, it seems to be
undecidable, even for $G=\abk \SL(n,\C)$ \cite{Miller}. We prove
that ($\ast$) is decidable \emph{when $G$ is solvable}, by giving
a practical algorithm to answer this question.

As further motivation for arithmeticity testing we observe that a
positive answer enables the use of arithmetic group theory to
investigate the group at hand. For example, the automorphism group
of a finitely generated nilpotent group is isomorphic to an
arithmetic group \cite[Corollary 9, p.~122]{Segal}. There are
polycyclic groups that are not isomorphic to any arithmetic group
\cite[Proposition 3, p.~259]{Segal}. Although we can test whether
a polycyclic subgroup of $G_{\Q}$ is arithmetic, testing
arithmeticity of polycyclic groups in this broader context is
still open.

Our approach draws on recent progress \cite{deGraafI, deGraafII}
in the construction of generating sets of $G_\Z$ when $G$ is
unipotent or a torus. We combine those to construct a finite index
subgroup of $G_\Z$ for a solvable algebraic group $G$---a result
of interest in its own right.

We adapt methods for computing with SF (solvable-by-finite) linear
groups \cite{Tits, Recog, SF}. To decide whether $H$ is
commensurable with $G_\Z$, we use an algorithm from \cite{SF} to
compute the rank of an SF linear group. We also design a simple
new algorithm to test whether a finitely generated SF subgroup of
$\glnq$ is conjugate to a subgroup of $\glnz$. Since integrality
is an important linear group property \cite[Theorem~3.5,
pp.~54--55]{DixonBook}, this is another useful result of the
paper. Integrality testing appears as a component of earlier
algorithms to test finiteness \cite{BBR} and polycyclicity
\cite{AssmanEickII}.

The paper is organized as follows. Section~\ref{Willem} gives a
procedure to construct a generating set of a finite index subgroup
of $G_\Z$, where $G$ is solvable algebraic. In
Section~\ref{IntegInter} we consider testing finiteness of
$|K:K_\Z|$ for $K\leq \glnq$. If this index is finite, we explain
how to find $K_\Z$ and $g\in \abk \GL(n,\Q)$ such that $K^g \leq
\GL(n,\Z)$. Section~\ref{IntegSFQ} then describes a new algorithm
to test integrality of SF subgroups of $\glnq$. Our main algorithm
is presented in Section~\ref{ArithTest}. A discussion of
experimental results derived from a {\sc Magma} \cite{Magma}
implementation of the algorithms concludes the paper.

\section{Computing an arithmetic subgroup of a solvable algebraic group}
\label{Willem}

Let $G$ be a solvable algebraic $\Q$-group. In this section we
combine results from \cite{deGraafI} and \cite{deGraafII} to
construct a generating set of a finite index subgroup of $G_\Z$.

The first result is \cite[Proposition 7.2 (3)]{vinbergv}.
\begin{proposition}\label{thm:sd}
Suppose that $G = A \ltimes N$, where both $A$ and $N$ are
algebraic subgroups of $G$, defined over $\Q$. Let
$\Gamma_A,\Gamma_N$ be arithmetic subgroups of $A$ and $N$
respectively. Then $\Gamma_A\ltimes \Gamma_N$ is an arithmetic
subgroup of $G$.
\end{proposition}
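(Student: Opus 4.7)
\smallskip\noindent\emph{Proof plan.} The plan is to exhibit a common finite-index subgroup of $\Gamma_A\ltimes \Gamma_N$ and $G_\Z$, from which the desired commensurability follows. Since any two arithmetic subgroups of a $\Q$-group are commensurable, I first reduce to the case $\Gamma_A=A_\Z$ and $\Gamma_N=N_\Z$: once $A_\Z\ltimes N_\Z$ is shown to be commensurable with $G_\Z$, the general statement follows by combining this with the commensurability of $\Gamma_A$ with $A_\Z$ and of $\Gamma_N$ with $N_\Z$, after passing to suitable finite-index subgroups so that the relevant normalization conditions hold. Note that $A_\Z\ltimes N_\Z$ is literally a subgroup of $G$, since elements of $A_\Z \subseteq \GL(n,\Z)$ conjugate $N_\Z$ into itself.

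Next I analyze the short exact sequence
\[
1\longrightarrow N_\Z \longrightarrow G_\Z \stackrel{\pi}{\longrightarrow} A_\Q
\]
induced by the projection $\pi\colon G=A\ltimes N\to A$. Its kernel is $G_\Z\cap N = N_\Z$, which is arithmetic in $N$. The heart of the argument is to show that $\pi(G_\Z)$ is commensurable with $A_\Z$. One containment is easy: a finite generating set of $G_\Z$ has entries with bounded denominators, so $\pi(G_\Z)$ lies in a finite-index overgroup of $A_\Z$ inside $A_\Q$. For the reverse containment I use the section $s\colon A\hookrightarrow G$, which is a $\Q$-morphism of affine varieties. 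Its coordinate functions have bounded denominators on $A_\Z$, so for a sufficiently deep principal congruence subgroup $A'\leq A_\Z$ the image $s(A')$ consists of integer matrices and therefore lies in $G_\Z$; hence $A'\subseteq \pi(G_\Z)$.

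Combining these two observations, $s(A')\cdot N_\Z$ is a finite-index subgroup of $G_\Z$; since $s(A')\cap N_\Z = \{1\}$ and $A$ normalizes $N$, it coincides with the semidirect product $A'\ltimes N_\Z$, which in turn is commensurable with $A_\Z\ltimes N_\Z$. This finishes the reduced case and hence, via the first paragraph, the proposition.

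The main obstacle I anticipate is the denominator-clearing step: verifying that some principal congruence subgroup of $A_\Z$ is sent by $s$ into $G_\Z$. This is standard but slightly delicate, relying on $s$ being polynomial with rational coefficients together with the $p$-adic approximation properties of deep congruence subgroups of $A_\Z$ for every prime $p$ dividing the denominators that appear.
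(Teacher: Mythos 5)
The paper itself offers no proof of this proposition --- it is quoted from \cite[Proposition 7.2(3)]{vinbergv} --- so your argument must stand on its own. Your overall route is the natural one: reduce to $\Gamma_A=A_\Z$, $\Gamma_N=N_\Z$, observe $A_\Z\ltimes N_\Z\leq G_\Z$, and bound $|G_\Z:A_\Z N_\Z|$ by $|\pi(G_\Z):A_\Z|$ using the projection $\pi\colon G\to A$ with kernel $N$; the reduction paragraph is also fine in outline (the subgroup generated by $\Gamma_A\cap A_\Z$ and $\Gamma_N\cap N_\Z$ has finite index in both $\Gamma_A\ltimes\Gamma_N$ and $A_\Z\ltimes N_\Z$ by the same index bound along $\pi$, and no ``normalization'' is actually needed). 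The genuine gap is that the step you call easy is where the whole content of the proposition sits, and your justification does not establish it. Bounded denominators of a \emph{generating set} give no control over the group it generates: products of matrices with denominators dividing $c$ can have arbitrarily large denominators. What is true is that \emph{every} element of $\pi(G_\Z)$ has a fixed common denominator, since $\pi$ is a $\Q$-defined morphism (its coordinates are polynomials with rational coefficients in the entries and $\det^{-1}$) evaluated on integer matrices of determinant $\pm 1$. Even granting that, a common denominator does not by itself place $\pi(G_\Z)$ in a finite-index overgroup of $A_\Z$; you still need an argument, for instance exactly the one in Section~\ref{IntegInter}: by Lemma~\ref{BBRspLemma} and Lemma~\ref{IfConjugateThenFI} (equivalently Proposition~\ref{FinIndexZpointsCriterion}), a subgroup of $\GL(n,\Q)$ with common denominator $d$ acts on the finite set of lattices between $\mathcal{L}$ and $\frac{1}{d}\mathcal{L}$, and the stabilizer of $\Z^n$ --- which here is $\pi(G_\Z)\cap\GL(n,\Z)=A_\Z$, using $A_\Z\subseteq\pi(G_\Z)$ --- has finite index. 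With that inserted, $|G_\Z:A_\Z N_\Z|\leq|\pi(G_\Z):A_\Z|<\infty$ and the proof closes.

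You also misplace the difficulty: the ``denominator-clearing'' for the section $s$ that you flag as the main obstacle is vacuous. In the statement $A$ is an algebraic \emph{subgroup} of $G$, so $s$ is the inclusion, $s(A_\Z)=A_\Z\subseteq G_\Z$, and $A_\Z\subseteq\pi(G_\Z)$ is immediate; no congruence subgroup is required. So: right strategy and a true conclusion, but the central finite-index claim is asserted on inadequate grounds; the fix is the lattice-stabilizer argument above (or an appeal to the general theorem that images of arithmetic groups under surjective $\Q$-morphisms are arithmetic, which is essentially what \cite{vinbergv} encapsulates).
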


From \cite[Ch.~5, \S 3, No.~5, Propositions 20 and 21]{cheviii} we
get the following.
\begin{proposition}\label{thm:solv}
Let $G$ be connected and solvable, with Lie algebra $\g \subset
\gl(n,\C)$. Let $\mf{n}$ be the ideal of $\g$ consisting of all
nilpotent elements of $\g$. Then there is a subalgebra $\mf{d}$ of
$\g$, consisting of commuting semisimple elements, such that $\g =
\mf{d}\oplus \mf{n}$. Moreover, both $\mf{n}$, $\mf{d}$ are
algebraic, and if we let $N$, $D$ be the corresponding connected
subgroups of $G$, then $G = D\ltimes N$.
\end{proposition}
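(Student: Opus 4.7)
The plan is to establish the proposition in four stages, mirroring the order of its assertions.

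First, I would show that $\mf{n}$ is an ideal of $\g$. Because $G$ is connected and solvable, Lie's theorem provides a basis of $\C^n$ in which every element of $\g$ is upper triangular. The nilpotent elements of $\g$ are then precisely those whose diagonal entries all vanish, a linear subspace closed under bracket with all of $\g$, hence an ideal; in particular $[\g,\g] \subseteq \mf{n}$, so $\g/\mf{n}$ is abelian.

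Second, I would construct $\mf{d}$. Since $\g$ is algebraic in $\gl(n,\C)$, the Jordan decomposition of each $x \in \g$ lies inside $\g$: $x = x_s + x_n$ with $x_s$ semisimple, $x_n \in \mf{n}$, and $[x_s,x_n] = 0$. The cited results from Bourbaki, Chapter VIII, then assemble the semisimple parts into a single abelian subalgebra $\mf{d}$ of commuting semisimple elements, complementary to $\mf{n}$, giving $\g = \mf{d} \oplus \mf{n}$. The existence of this subalgebra---that the set of semisimple parts closes up under the bracket---is the main technical obstacle, and is where I would rely directly on the reference.

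Third, both summands are algebraic. Any Lie subalgebra of $\gl(n,\C)$ consisting of nilpotent matrices is algebraic, because the exponential restricts there to a polynomial isomorphism onto a closed unipotent subgroup; this handles $\mf{n}$. For $\mf{d}$, the commuting semisimple elements are simultaneously diagonalizable, so $\mf{d}$ is a toral subalgebra, which is algebraic, with associated connected group a torus.

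Finally, let $N$ and $D$ be the connected algebraic subgroups of $G$ with Lie algebras $\mf{n}$ and $\mf{d}$. Since $\mf{n}$ is an ideal, $N$ is normal in $G$. The product map $D \times N \to G$ has differential the isomorphism $\mf{d} \oplus \mf{n} \to \g$ at the identity, so its image is open and, by connectedness of $G$, exhausts $G$. The intersection $D \cap N$ is contained in both a torus and a unipotent group, so each of its elements is simultaneously semisimple and unipotent and hence trivial. Combining normality of $N$, surjectivity of the product map, and triviality of the intersection yields $G = D \ltimes N$.
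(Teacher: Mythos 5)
The paper offers no argument of its own here: the proposition is quoted verbatim as a consequence of Chevalley, \emph{Th\'eorie des Groupes de Lie}, Tome III, Ch.~V, \S 3, No.~5, Propositions 20 and 21 (not Bourbaki Ch.~VIII, as you guessed). So your strategy of deferring the central point---that the semisimple parts can be assembled into a single commuting complement $\mf{d}$ with $\g = \mf{d}\oplus\mf{n}$---to the literature is exactly what the authors do, and your surrounding steps (triangularizing via Lie's theorem to see that $\mf{n}$ is an ideal containing $[\g,\g]$; algebraicity of $\mf{n}$ via the polynomial exponential; normality of $N$, surjectivity of $D\times N\to G$, and triviality of $D\cap N$ from semisimple-versus-unipotent) are standard and sound.

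There is, however, one genuinely flawed step: the claim that $\mf{d}$, being a toral subalgebra (commuting, simultaneously diagonalizable elements), is therefore algebraic. That implication is false in general: the line spanned by $\mathrm{diag}(1,\alpha)$ with $\alpha$ irrational is toral but not algebraic, since an algebraic toral subalgebra must be closed under replicas (equivalently, be the Lie algebra of a torus), and the algebraic hull of that line is the full two-dimensional diagonal subalgebra. Algebraicity of $\mf{d}$ is precisely part of what the cited Chevalley propositions assert, so you cannot recover it from ``toral'' alone. If you want an independent argument, the correct route is to note that any complement $\mf{d}$ as in the statement is a \emph{maximal} toral subalgebra of the algebraic Lie algebra $\g$ (any larger toral subalgebra would meet $\mf{n}$ in a nonzero element that is both semisimple and nilpotent), that all such maximal toral subalgebras are conjugate under the unipotent radical, and that one of them---the Lie algebra of a maximal torus of $G$---is algebraic; conjugation by an element of $G$ preserves algebraicity, so $\mf{d}$ is algebraic and $D$ is indeed a torus, which is what your final ``semisimple meets unipotent'' step also needs.
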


Let $G$, $\g$ be as in Proposition~\ref{thm:solv}. We suppose that
$\g$ is given by a basis consisting of matrices having entries in
$\Q$. In \cite{gra13}, an algorithm that computes bases of
subalgebras $\mf{d}$, $\mf{n}$ with the properties of
Proposition~\ref{thm:solv} is given. Here we do not go into the
details but only briefly recall the basic steps.
\begin{enumerate}
\item Compute a Cartan subalgebra $\h$ of $\g$. \item Let
$a_1,\ldots,a_r$ be a basis of $\h$, and let $a_i = s_i+n_i$ be
the Jordan decomposition of $a_i$. \item Let $\mf{d}$ be the space
spanned by $s_1,\ldots,s_r$ and let $\mf{n}$ be the space spanned
by $n_1,\ldots,n_r$ along with the Fitting $1$-component
$\g_1(\h)$. (The latter is the space $\cap_{i\geq 1} [ \h^i, \g]$,
where $[\h^i,\g] = \abk [\h,[\h,\ldots,[\h,\g]\ldots ]]$ ($i$
factors $\h$).)
\end{enumerate}

Starting from the given basis of $\g$, we compute generators of
an arithmetic group in $G_\Z$ by the following.

\vspace*{15pt}

${\tt GeneratingArithmetic}\hspace{.25pt} (G)$

\vspace*{1mm}

Input: a basis of the Lie algebra $\g$ of the solvable algebraic
group $G$.

Output: 
 a generating set for a finite index subgroup of $G_\Z$.

\begin{enumerate}
\item Compute bases of $\mf{d}$ and $\mf{n}$ as above. Denote by
$D$, $N$ the connected subgroups of $G$ with respective Lie
algebras $\mf{d}$, $\mf{n}$. \item Compute generators of $N_\Z$
using the algorithm from \cite{deGraafI}, and generators of $D_\Z$
using the algorithm from \cite{deGraafII}. (These algorithms take
as input bases of $\mf{d}$ and $\mf{n}$ respectively.) \item
Return the union of the two generating sets obtained in the
previous step.
\end{enumerate}

\vspace*{15pt}

We note that ${\tt GeneratingArithmetic}$ is correct. Indeed, let
$H$ be the group generated by its output. Then $H \leq G_\Z$. On
the other hand, $H$ is an arithmetic subgroup of $G$ by
Proposition~\ref{thm:sd}. So $H$ has finite index in $G_\Z$.

${\tt GeneratingArithmetic}(G)$ forms a vital part of our main
algorithm. Notice that $G$ need not be connected. Indeed, let
$G^\circ$ be the connected component of the identity of $G$. Since
$|G_\Z:G_\Z^\circ|$ is finite, ${\tt
GeneratingArithmetic}(G^\circ)$ returns a generating set of an
arithmetic subgroup of $G$.

\section{Integrality and $\mathrm{GL}(n,\mathbb{Z})$-intercepts}
\label{IntegInter}

This section and the next depend on some ideas from \cite{BBR}.

An element or subgroup of $\glnq$ that has a conjugate in $\glnz$
is said to be \emph{integral}. Let $H\leq \glnq$.

\begin{lemma}\cite[Proposition 2.3]{BBR}
\label{BBRspLemma} $H$ is integral if and only if there exists a
positive integer $d$ such that $dh \in \abk \mathrm{Mat}(n,\Z)$
for all $h\in H$.
\end{lemma}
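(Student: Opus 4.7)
\smallskip

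The plan is to prove both directions by constructing explicit data. For the forward direction, I assume $H$ is integral, so there is $g \in \GL(n,\Q)$ with $g^{-1}Hg \leq \GL(n,\Z)$. Clearing denominators, I write $g = A/a$ and $g^{-1} = B/b$ with $A,B \in \mathrm{Mat}(n,\Z)$ and $a,b$ positive integers. Then for every $h \in H$ we have $h = g(g^{-1}hg)g^{-1} = \frac{1}{ab}\,A(g^{-1}hg)B$, and since $g^{-1}hg \in \mathrm{Mat}(n,\Z)$, the matrix $ab \cdot h$ lies in $\mathrm{Mat}(n,\Z)$. Hence $d := ab$ works simultaneously for every element of $H$.

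For the converse, I would use the standard lattice construction: assuming $dh \in \mathrm{Mat}(n,\Z)$ for all $h \in H$, form the $\Z$-submodule
\[
L \ = \ \sum_{h \in H} h \cdot \Z^n \ \subseteq \ \Q^n.
\]
This is $H$-invariant by construction (both $hL \subseteq L$ and $h^{-1}L \subseteq L$, giving equality). I then want $L$ to be a full $\Z$-lattice in $\Q^n$. Since $\Z^n \subseteq L$ (take $h = 1$), the lattice $L$ has full rank; and since $h \Z^n \subseteq \frac{1}{d}\Z^n$ for every $h \in H$, we get $L \subseteq \frac{1}{d}\Z^n$. Now $\frac{1}{d}\Z^n$ is a Noetherian $\Z$-module, so the submodule $L$ is finitely generated, i.e.\ a lattice.

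Because $L$ is a rank-$n$ $\Z$-lattice in $\Q^n$, there exists $g \in \GL(n,\Q)$ with $L = g\Z^n$. The $H$-invariance of $L$ then gives, for every $h \in H$, that $g^{-1}hg \cdot \Z^n = \Z^n$, whence $g^{-1}hg \in \GL(n,\Z)$. Thus $g^{-1}Hg \leq \GL(n,\Z)$ and $H$ is integral.

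The only genuine subtlety is ensuring $L$ is finitely generated; this is precisely where the hypothesis $dh \in \mathrm{Mat}(n,\Z)$ (uniform in $h$) is used, via the sandwich $\Z^n \subseteq L \subseteq \frac{1}{d}\Z^n$. Without that uniformity one could only say each $h\Z^n$ is contained in some $\frac{1}{d_h}\Z^n$, and the sum $L$ need not even be contained in a single lattice. Everything else is routine.
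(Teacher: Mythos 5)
Your proof is correct and follows essentially the same route as the paper: clearing denominators of $g$ and $g^{-1}$ for the forward direction, and conjugating by a basis matrix of the $H$-invariant lattice generated by the translates of $\Z^n$ (yours differs from the paper's lattice only by the harmless scalar $d$) for the converse. You merely spell out the finiteness/full-rank verification via the sandwich $\Z^n \subseteq L \subseteq \frac{1}{d}\Z^n$, which the paper leaves implicit.
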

\begin{proof}
Suppose that $H^g \leq \abk \GL(n,\Z)$ for some $g\in \glnq$. Then
$m^2H \subseteq \mathrm{Mat}(n,\Z)$ where $m$ is any common
multiple of the denominators of the entries in $g$ and $g^{-1}$.

Suppose that $dH \subseteq \mathrm{Mat}(n,\Z)$. Let $g$ be any
matrix whose columns comprise a $\Z$-\abk basis of the lattice
generated by $dH\Z^n$. Then $g \in \GL(n,\Q)$ and $H^g \leq \abk
\GL(n,\Z)$.
\end{proof}

Call $d= d(H)$ as in Lemma~\ref{BBRspLemma} a \emph{common
denominator} for $H$. As the above proof demonstrates, knowing
$d(H)$ is equivalent to knowing $g\in \glnq$ such that $H^g\leq
\glnz$. A method to construct $d$ may be extracted from
\cite[Section 2]{BBR}; we give an algorithm that is a modification
of this for our purposes in Section~\ref{IntegSFQ}. If $H$ is
finitely generated then we calculate $g$ from $d$ by means of the
following (cf.~\cite[Section 3]{BBR}).

\vspace*{15pt}

${\tt BasisLattice}\hspace{.25pt} (S, d)$

\vspace*{1mm}

Input: a finite set $S \subseteq \GL(n,\Q)$ and $d = d(H)$,
$H=\gpess$.

Output: a basis for the lattice generated by $dH\Z^n$ in $\Z^n$.

\vspace*{1mm}

\begin{enumerate}
\item $\mathcal L:= d\Z^n$.

\item While $\exists \ h \in S\cup S^{-1}$ such that $h\mathcal{L}
\not \subseteq \mathcal{L}$ do

\item[] \hspace*{20pt} $\mathcal{L}:=$ the lattice generated by
$\mathcal{L} \cup h\mathcal{L}$.

\item Return a basis of $\mathcal L$.

\end{enumerate}

\vspace*{15pt}

We write $L\fis H$ to indicate that the subgroup $L$ has finite
index in $H$.
\begin{lemma}\label{FIIntegralIff}
Let $H_1\fis H$. Then $H$ is integral if and only if $H_1$ is
integral.
\end{lemma}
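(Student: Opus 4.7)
The plan is to prove both implications using the common-denominator characterization of integrality (Lemma~\ref{BBRspLemma}), which reduces the question from conjugacy into $\GL(n,\Z)$ to the much more tractable condition that $dH \subseteq \mathrm{Mat}(n,\Z)$ for some positive integer $d$.

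The forward direction is immediate: if $g \in \GL(n,\Q)$ satisfies $H^g \leq \GL(n,\Z)$, then $H_1^g \leq H^g \leq \GL(n,\Z)$, so $H_1$ is integral. No appeal to Lemma~\ref{BBRspLemma} is even needed here.

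For the converse, suppose $H_1$ is integral, so by Lemma~\ref{BBRspLemma} there is a positive integer $d_1$ with $d_1 H_1 \subseteq \mathrm{Mat}(n,\Z)$. Since $H_1$ has finite index in $H$, I would fix a finite transversal $t_1,\ldots,t_k \in H$ for the left cosets of $H_1$ in $H$, so that every $h \in H$ can be written uniquely as $h = t_i h_1$ for some $i$ and some $h_1 \in H_1$. Let $d_2$ be any positive common multiple of the denominators of all entries of $t_1,\ldots,t_k$, so that $d_2 t_i \in \mathrm{Mat}(n,\Z)$ for each $i$. Then for any $h = t_i h_1 \in H$,
\[
d_1 d_2 \, h \;=\; (d_2 t_i)(d_1 h_1) \;\in\; \mathrm{Mat}(n,\Z) \cdot \mathrm{Mat}(n,\Z) \;\subseteq\; \mathrm{Mat}(n,\Z).
\]
Hence $d := d_1 d_2$ is a common denominator for $H$, and one more invocation of Lemma~\ref{BBRspLemma} gives that $H$ is integral.

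There is no real obstacle here; the only thing to notice is that $H_1$ is not assumed to be normal in $H$, but this is harmless since a left transversal exists for any finite-index subgroup and the argument uses only left coset decomposition. The equivalence thus reduces, via Lemma~\ref{BBRspLemma}, to the trivial fact that finitely many rational matrices admit a common denominator.
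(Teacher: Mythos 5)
Your proof is correct and follows essentially the same route as the paper: the forward direction is the trivial observation that a subgroup of an integral group is integral, and the converse uses Lemma~\ref{BBRspLemma} together with a finite transversal for $H_1$ in $H$ to produce the common denominator $d_1 d_2$. No discrepancies worth noting.
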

\begin{proof}
Suppose that $H_1$ is integral. By Lemma~\ref{BBRspLemma},
$d_1H_1\subseteq \mathrm{Mat}(n,\Z)$ for some $d_1\in \Z$. Choose
a transversal $\{ h_1 , \ldots, \abk h_r\}$ for the cosets of
$H_1$ in $H$, and let $d_2$ be a positive integer such that
$d_2h_i \in \mathrm{Mat}(n,\Z)$ for all $i$. Since
$d_1d_2H\subseteq \mathrm{Mat}(n,\Z)$, $H$ is integral by
Lemma~\ref{BBRspLemma} again.
\end{proof}

\begin{lemma}\label{IfConjugateThenFI}
Suppose that $d$ is a common denominator for $H$, and let
$\mathcal{L}$ be the lattice generated by $dH\Z^n$.
\begin{itemize}
\item[{\rm (i)}] $\mathcal{L} \subseteq \Z^n \subseteq \frac{1}{d}
\mathcal{L}$.

\item[{\rm (ii)}] $H$ acts by left multiplication on the (finite)
set of lattices lying between $\mathcal{L}$ and
$\frac{1}{d}\mathcal{L}$.

\item[{\rm (iii)}] $H_\Z$ is the stabilizer of $\, \Z^n$ under the
action in \rm{(ii)}.
\end{itemize}
\end{lemma}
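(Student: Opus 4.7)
The plan is to rest everything on one central observation: for every $h\in H$, left multiplication by $h$ fixes $\mathcal{L}$. I would establish this first. Since $\mathcal{L}$ is generated as a lattice by the elements $dh'v$ with $h'\in H$, $v\in\Z^n$, and since $h\cdot dh'v=d(hh')v$ with $hH=H$, the map $h$ permutes a generating set of $\mathcal{L}$, so $h\mathcal{L}=\mathcal{L}$ and therefore $h\cdot\tfrac{1}{d}\mathcal{L}=\tfrac{1}{d}\mathcal{L}$.

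For part (i), I would derive $\mathcal{L}\subseteq\Z^n$ directly from the common-denominator hypothesis: each $dh$ lies in $\mathrm{Mat}(n,\Z)$, so every generator $dhv$ of $\mathcal{L}$ belongs to $\Z^n$. The reverse containment $\Z^n\subseteq\tfrac{1}{d}\mathcal{L}$ reduces to $d\Z^n\subseteq\mathcal{L}$, which follows by taking $h'=1$ among the defining generators. For part (ii), the central observation makes both $\mathcal{L}$ and $\tfrac{1}{d}\mathcal{L}$ $H$-stable, so for any intermediate lattice $M$ one has $\mathcal{L}=h\mathcal{L}\subseteq hM\subseteq h\cdot\tfrac{1}{d}\mathcal{L}=\tfrac{1}{d}\mathcal{L}$; the action is well-defined. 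Finiteness of the set of such $M$ is automatic, since intermediate lattices correspond to subgroups of the finite quotient $\tfrac{1}{d}\mathcal{L}/\mathcal{L}$, which has order $d^n$.

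For part (iii), one direction is immediate: if $h\in H_\Z$, then $h,h^{-1}\in\GL(n,\Z)$, so $h\Z^n=\Z^n$. For the converse, I would note that $h\Z^n\subseteq\Z^n$ forces $h\in\mathrm{Mat}(n,\Z)$ (its columns are the images of the standard basis vectors), and symmetrically for $h^{-1}$, whence $h\in\GL(n,\Z)\cap H=H_\Z$. There is no real obstacle here: the whole lemma reduces to the $H$-stability of $\mathcal{L}$, together with the standard fact that a matrix sends $\Z^n$ into itself precisely when it is integral.
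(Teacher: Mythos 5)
Your proof is correct and follows essentially the same route as the paper's: the containments in (i) from $dH\subseteq\mathrm{Mat}(n,\Z)$ and $d\Z^n\subseteq dH\Z^n$, the $H$-stability $H\mathcal{L}=\mathcal{L}$ for (ii), and the standard "$h\Z^n=\Z^n$ iff $h\in\GL(n,\Z)$" argument for (iii). You merely spell out a few details the paper leaves implicit (finiteness via the quotient $\frac{1}{d}\mathcal{L}/\mathcal{L}$ of order $d^n$, and the integrality of $h^{-1}$ in the converse of (iii)), which is fine.
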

\begin{proof}
(i) \phantom{} Clear: $dH \subseteq \mathrm{Mat}(n,\Z)$ and $d\Z^n
\subseteq dH\Z^n \subseteq \mathcal L$.

(ii) \phantom{} We have $H\mathcal L = \mathcal L$. Thus $H$ acts
on the set of lattices $\mathcal{L}'$ such that $\mathcal{L}
\subseteq \mathcal{L}' \subseteq \frac{1}{d}\mathcal{L}$.

(iii) \phantom{} Let $h\in H$. If $h\in H_\Z$ then $\Z^n =
h(h^{-1}\Z^n) \subseteq h\Z^n$, so $h\Z^n=\Z^n$. Conversely, if $h
\Z^n = \abk \Z^n$ then every entry in $h$ is an integer.
\end{proof}

\begin{proposition}\label{FinIndexZpointsCriterion}
$H$ is integral if and only if $H_\Z\fis H$.
\end{proposition}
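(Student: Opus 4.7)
The plan is to prove each direction using one of the preceding lemmas, so that almost all of the work has been packaged in advance.

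For the forward implication, I would assume $H$ is integral and invoke Lemma~\ref{BBRspLemma} to fix a common denominator $d$ with $dH \subseteq \mathrm{Mat}(n,\Z)$. Let $\mathcal{L}$ be the lattice generated by $dH\Z^n$. By Lemma~\ref{IfConjugateThenFI}(i)--(iii), $H$ acts on the finite set of lattices $\mathcal{L}'$ satisfying $\mathcal{L} \subseteq \mathcal{L}' \subseteq \frac{1}{d}\mathcal{L}$, and $H_\Z$ is precisely the stabilizer of $\Z^n$ under this action. The orbit-stabilizer correspondence then shows $|H : H_\Z|$ equals the size of the $H$-orbit of $\Z^n$, which is finite; hence $H_\Z \fis H$.

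For the reverse implication, I would assume $H_\Z \fis H$ and note that $H_\Z \leq \GL(n,\Z)$ is trivially integral (the identity matrix conjugates it into $\GL(n,\Z)$). Then Lemma~\ref{FIIntegralIff}, applied with $H_1 = H_\Z$, immediately yields that $H$ is integral.

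Neither direction presents a real obstacle: the bulk of the work has already been done in Lemmas~\ref{BBRspLemma}, \ref{FIIntegralIff}, and \ref{IfConjugateThenFI}. The only subtle point worth checking is that the set of intermediate lattices in Lemma~\ref{IfConjugateThenFI}(ii) is genuinely finite, but this is standard: such lattices correspond to subgroups of the finite abelian group $\frac{1}{d}\mathcal{L}/\mathcal{L} \cong (\Z/d\Z)^n$.
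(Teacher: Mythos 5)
Your proof is correct and follows essentially the same route as the paper, which simply cites Lemmas~\ref{FIIntegralIff} and \ref{IfConjugateThenFI}; you have just spelled out the orbit--stabilizer detail for the forward direction and the trivial integrality of $H_\Z$ for the converse.
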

\begin{proof}
This is a consequence of Lemmas~\ref{FIIntegralIff} and
\ref{IfConjugateThenFI}.
\end{proof}

The following procedure constructs $H_\Z$ if $H$ is finitely
generated and $d(H)$ is known.

\vspace*{15pt}

${\tt IntegralIntercept}\hspace{.25pt} (S, d)$

\vspace*{1mm}

Input: a finite set $S \subseteq \GL(n,\Q)$ and $d = d(H)$,
$H=\gpess$.

Output: a generating set of $H_\Z$.

\vspace*{1mm}

\begin{enumerate}

\item $\mathcal{L}:=$ the lattice generated by ${\tt
BasisLattice}\hspace{.25pt} (S, d)$.

\item $\Lambda :=$ the set of all lattices $\mathcal L'$ such that
$\mathcal{L} \subseteq \mathcal{L}'\subseteq
\frac{1}{d}\mathcal{L}$.

\item \label{StabCalc} Return a generating set for the stabilizer
of $\Z^n$ under the action of $H$ on $\Lambda$.
\end{enumerate}

\vspace*{10pt}

\begin{remark}\label{HowToComputeZPoints}
Step \eqref{StabCalc} may be carried out using standard algorithms
for finite permutation groups to obtain a transversal for $H_\Z$
in $H$, then writing down Schreier generators. More practical
approaches are possible in special situations; say for polycyclic
$H$ (e.g., arithmetic $H\leq G_\Q$ and solvable $G$). In that case
an algorithm can be based on an orbit-stabilizer approach. This is
similar to the algorithm described in \cite{sogos}. Such an
algorithm enumerates the orbit of $\Z^n$ under action by $H$, in
the process obtaining generators of the stabilizer. The efficiency
of this approach depends heavily on orbit size.
\end{remark}

\section{Integrality of solvable-by-finite subgroups
of $\mathrm{GL}(n,\mathbb{Q})$}\label{IntegSFQ}

We next describe how to test integrality of a finitely generated
SF subgroup of $\glnq$, and compute a common denominator if the
group is integral.

Let $H = \langle h_1, \ldots , h_r\hspace*{-1pt} \rangle \leq
\glnq$. We have $H\leq \glnr$ where $R= \frac{1}{b}\Z = \{ a/b^i
\, | \, a \in \Z, \, \abk i\geq 0\}$ for some positive integer $b$
determined by the entries in the $h_i$ and $h_i^{-1}$. For any
prime $p\in \abk \Z$ not dividing $b$, reduction modulo $p$ of
matrix entries defines a congruence homomorphism $\varphi_p: \abk
\glnr\rightarrow \abk \GL(n,p)$. If $H$ is SF and $p\neq \abk 2$
then $H_p=\ker \varphi_p\cap H$ is torsion-free and
unipotent-by-abelian (see \cite[Lemma 9]{DixonOrbitStabilizer} or
\cite[Section 2.2.1]{Tits}). Assume that $p$ has been so chosen
whenever $H$ is SF.

Denote the unipotent radical of $K\leq \glnq$ by $U(K)$. Replace
$K$ if necessary by a conjugate in block triangular form with
completely reducible diagonal blocks. If $\pi$ denotes projection
of $K$ onto the block diagonal then $U(K)= \ker \pi$.
\begin{lemma}\label{VariousEqu}
The following are equivalent. \begin{itemize} \item[{\rm (i)}] $H$
is integral.  \item[{\rm (ii)}] $\pi(H)$ is integral. \item[{\rm
(iii)}] $\pi(H_p)$ is integral.\end{itemize}
\end{lemma}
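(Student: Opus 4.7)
The plan is to establish the cyclic chain of implications (i) $\Rightarrow$ (ii) $\Rightarrow$ (iii) $\Rightarrow$ (i). The first two are routine consequences of Lemma~\ref{BBRspLemma}. For (i) $\Rightarrow$ (ii), if $dH \subseteq \mathrm{Mat}(n,\Z)$ then the diagonal blocks of $dh$ constitute $d\pi(h) \in \mathrm{Mat}(n,\Z)$, so the same $d$ serves as a common denominator for $\pi(H)$. For (ii) $\Rightarrow$ (iii), note $\pi(H_p) \leq \pi(H)$; a subgroup of an integral group is integral, since the same conjugating element works.

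The content is in (iii) $\Rightarrow$ (i). Since $H_p = H\cap \ker\varphi_p$ and $\glnp$ is finite, $H_p \fis H$, so by Lemma~\ref{FIIntegralIff} it suffices to show $H_p$ is integral. The hypothesis furnishes a block-diagonal $g \in \glnq$ with $g\pi(H_p)g^{-1} \subseteq \glnz$; conjugating $H_p$ by $g$ preserves the block triangular form, so we may assume $\pi(H_p) \subseteq \glnz$. The task then reduces to bounding the denominators of the off-diagonal entries of arbitrary elements of $H_p$.

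Fix a finite generating set $h_1,\ldots,h_r$ of $H_p$ (such a set exists as $H_p$ is polycyclic) and decompose $h_i^{\pm 1} = \pi(h_i)^{\pm 1} + N_i^{\pm}$, where $N_i^{\pm}$ is strictly block upper triangular. Let $D$ be a common denominator for the entries of the $N_i^{\pm}$. Expand an arbitrary word $h = h_{i_1}^{\epsilon_1}\cdots h_{i_k}^{\epsilon_k}$ as a sum of monomials in the factors $\pi(h_{i_j})^{\epsilon_j}$ and $N_{i_j}^{\epsilon_j}$. If $m$ is the number of diagonal blocks, any product of $m$ strictly block upper triangular matrices vanishes, so only monomials containing at most $m-1$ factors of type $N$ survive. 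In any surviving monomial, the maximal runs of consecutive $\pi$-factors lie in $\pi(H_p)\subseteq \glnz$ and so contribute only integer matrices, while each of the at most $m-1$ factors of type $N$ contributes a denominator dividing $D$. Hence every entry of every monomial, and therefore every entry of $h$, has denominator dividing $D^{m-1}$---a bound independent of the word length $k$. Lemma~\ref{BBRspLemma} now delivers the integrality of $H_p$.

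The main obstacle is precisely this denominator control in (iii) $\Rightarrow$ (i): bounded denominators on the diagonal do not obviously propagate to the off-diagonal positions of an arbitrary product, since naive expansion of a length-$k$ word has exponentially many terms. The saving grace is the nilpotency of the ideal of strictly block upper triangular matrices, which truncates every expansion after fewer than $m$ off-diagonal factors and yields a denominator bound depending only on the generators and the block structure.
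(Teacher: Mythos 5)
Your proof is correct, but it takes a genuinely different route from the paper's. The paper settles (i) $\Leftrightarrow$ (ii) in one line via the trace criterion \cite[Theorem 2.4]{BBR} (a finitely generated subgroup of $\glnq$ is integral if and only if all its traces are integers), since $\mathrm{tr}(h)=\mathrm{tr}(\pi(h))$; it then gets (ii) $\Leftrightarrow$ (iii) from Lemma~\ref{FIIntegralIff} applied to $\pi(H_p)\fis\pi(H)$, using $|\pi(H):\pi(H_p)|=|H:H_p\,U(H)|<\infty$. You instead close the cycle (i)$\Rightarrow$(ii)$\Rightarrow$(iii)$\Rightarrow$(i), putting all the work into (iii)$\Rightarrow$(i): reduce to $H_p$ via $H_p\fis H$ and Lemma~\ref{FIIntegralIff}, then bound denominators of arbitrary words by splitting each generator into its block-diagonal part plus a strictly block upper triangular part and exploiting nilpotency of the latter. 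This is essentially the mechanism the paper itself uses later in Section~\ref{IntegSFQ} to produce the common denominator $d(H)$, so your argument is in the paper's spirit; what it buys is independence from the (cited, unproved) trace criterion and effectivity --- it yields an explicit common denominator for $H$ --- at the cost of being longer than the paper's two-line proof.

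Two small repairs are needed. First, $H_p$ is finitely generated because it has finite index in the finitely generated group $H$ (Schreier generators), not because it is polycyclic: finitely generated solvable subgroups of $\glnq$, and hence their congruence subgroups $H_p$, need not be polycyclic. Second, hypothesis (iii) supplies some $g\in\glnq$ with $\pi(H_p)^g\leq\glnz$, not automatically a block-diagonal one; justify this with one line (take a common denominator $d$ for $\pi(H_p)$ from Lemma~\ref{BBRspLemma} and run the lattice construction of that lemma blockwise, which is possible because $\pi(H_p)$ preserves the block decomposition of $\Q^n$), or avoid the conjugation altogether by observing that each maximal run of $\pi$-factors lies in $\pi(H_p)$ and so has denominator dividing $d$, giving the bound $d^{m}D^{m-1}$ in place of $D^{m-1}$.
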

\begin{proof}
By \cite[Theorem~2.4]{BBR}, a finitely generated subgroup $K$ of
$\glnq$ is integral if and only if $\{ \mathrm{tr}(x) \mid x \in
K\} \subseteq \Z$. Thus (i) $\Leftrightarrow$ (ii). Since
$|\pi(H):\pi(H_p)| = |H:H_p \hspace*{1.5pt} U(H)|<\infty$,
Lemma~\ref{FIIntegralIff} gives (ii) $\Leftrightarrow$ (iii).
\end{proof}

Denote the characteristic polynomial of $h\in \glnc$ by
$\chi_h(X)$.
\begin{proposition}[\mbox{Cf.} Lemma 8 and Theorem 9 of
\cite{AssmanEickII}]
\label{CharPolyCrit} Suppose that each element of $H$ is equal to
$h_1^{m_1} \cdots h_r^{m_r}$ for some $m_i\in\Z$. Then $H$ is
integral if and only if each $h_i$ is integral, i.e.,
$\chi_{h_i}(X)\in \abk \Z[X]$ and $\mathrm{det}(h_i)=\pm 1$ for
$1\leq i\leq r$.
\end{proposition}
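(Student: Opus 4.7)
My plan is to prove the two implications separately: the forward direction by conjugation invariance of the characteristic polynomial and determinant, and the backward direction by embedding $H$ inside a finitely generated $\Z$-submodule of $\mathrm{Mat}(n,\Q)$ and invoking Lemma~\ref{BBRspLemma}.

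For the forward direction I would pick $g \in \glnq$ with $H^g \leq \glnz$. Then each $h_i^g$ is an integer matrix with determinant $\pm 1$, so its characteristic polynomial lies in $\Z[X]$ and its determinant is $\pm 1$. Since both $\chi$ and $\det$ are conjugation invariants, the same holds for $h_i$. That is essentially the whole argument in this direction.

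For the converse, the first step is to show, for each $i$, that $\Z[h_i]$ is a finitely generated $\Z$-submodule of $\mathrm{Mat}(n,\Q)$ containing $h_i^{-1}$. Cayley--Hamilton together with $\chi_{h_i} \in \Z[X]$ makes $\Z[h_i]$ the $\Z$-span of $I, h_i, \ldots, h_i^{n-1}$. Because the constant term $a_0 = (-1)^n \det(h_i)$ of $\chi_{h_i}$ equals $\pm 1$, the identity $\chi_{h_i}(h_i) = 0$ can be solved for $h_i^{-1}$ as an integer polynomial in $h_i$, so $h_i^{m_i} \in \Z[h_i]$ for every $m_i \in \Z$. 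Next I would form the product $\Z$-module
\[
M := \Z[h_1]\,\Z[h_2]\cdots \Z[h_r] \subseteq \mathrm{Mat}(n,\Q),
\]
which, after multiplying out, is $\Z$-spanned by the at most $n^r$ monomials $h_1^{k_1} \cdots h_r^{k_r}$ with $0 \leq k_i < n$. The normal form hypothesis then places every element of $H$ inside $M$. Since $M$ is a finitely generated $\Z$-module inside $\mathrm{Mat}(n,\Q)$, clearing denominators of a $\Z$-generating set of $M$ produces a positive integer $d$ with $dH \subseteq dM \subseteq \mathrm{Mat}(n,\Z)$, and Lemma~\ref{BBRspLemma} concludes that $H$ is integral.

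The only step requiring real care is inverting $h_i$ inside $\Z[h_i]$, where both hypotheses---$\chi_{h_i} \in \Z[X]$ \emph{and} $\det(h_i) = \pm 1$---are essential (integer characteristic polynomial alone is not enough to guarantee $h_i^{-1} \in \Z[h_i]$). It is also worth noting that the fixed left-to-right order $h_1, \ldots, h_r$ in the normal form is exactly what lets $H$ sit inside $M$; for an arbitrary product generated by the $h_i$, non-commuting alternations could produce elements outside a product module of this shape, so the normal form hypothesis is doing genuine work here.
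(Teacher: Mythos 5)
Your proof is correct and follows essentially the same route as the paper: Cayley--Hamilton plus constant term $\pm 1$ puts all powers $h_i^{m_i}$ (including negative ones) in the $\Z$-span of $I, h_i, \ldots, h_i^{n-1}$, and the normal form hypothesis then yields a common denominator $d = d_1\cdots d_r$ for $H$, so Lemma~\ref{BBRspLemma} applies. Your product-module $M$ is just a slightly more formal packaging of the paper's argument, and the forward direction (which the paper omits as immediate) is handled the same obvious way.
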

\begin{proof}
If $\chi_{h_i}(X)\in\Z[X]$ has constant term $\pm 1$ for all $i$
then $\langle h_i \rangle \subseteq \big\{
\textstyle{\sum_{j=0}^{n-1}}a_jh_i^j \mid a_j \in \Z\big\}$. So
there exist positive integers $d_i$, $1\leq i\leq r$, such that
$d_i \langle h_i \rangle \subseteq \mathrm{Mat}(n,\Z)$. Hence $d=
d_1 \cdots d_r$ is a common denominator for $H$.
\end{proof}

If $H$ is polycyclic with polycyclic sequence $(h_1, \ldots ,
h_r)$, then $H$ satisfies the hypothesis of
Proposition~\ref{CharPolyCrit}. Any generating set of $H$ is
similarly suitable when $H$ is abelian.
\begin{lemma}\label{NormGenIntegralSuffices}
Suppose that $H\leq K\leq \glnq$ and the normal closure $H^K$ is
finitely generated abelian. Then $H^K$ is integral if and only if
each $h_i$ is integral.
\end{lemma}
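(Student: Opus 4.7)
The plan is to handle the two directions separately, using Proposition \ref{CharPolyCrit} for the non-trivial direction.

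For the forward direction, suppose $H^K$ is integral. Each $h_i$ lies in $H^K$, so there exists $g \in \glnq$ with $(H^K)^g \leq \glnz$; in particular $h_i^g \in \glnz$, so each $h_i$ is integral.

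For the converse, the idea is to produce a finite integral generating set of the abelian group $H^K$ and then invoke Proposition~\ref{CharPolyCrit}. First I would observe that the full set of conjugates $C = \{k^{-1}h_ik : k \in K,\, 1 \leq i \leq r\}$ generates $H^K$ as a group, and because $H^K$ is abelian, group generation by $C$ coincides with $\Z$-module generation by $C$. Each element of $C$ is integral: conjugation preserves the characteristic polynomial and determinant, so $\chi_{k^{-1}h_ik}(X) = \chi_{h_i}(X) \in \Z[X]$ and $\det(k^{-1}h_ik) = \det(h_i) = \pm 1$.

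Next, since $H^K$ is finitely generated abelian, it is a Noetherian $\Z$-module, so some \emph{finite} subset $\{c_1, \ldots, c_s\} \subseteq C$ already generates $H^K$ as a $\Z$-module, hence as a group. Every element of $H^K$ is therefore of the form $c_1^{m_1} \cdots c_s^{m_s}$ with $m_j \in \Z$, which is precisely the hypothesis of Proposition~\ref{CharPolyCrit}. Since each $c_j$ is integral, Proposition~\ref{CharPolyCrit} yields a common denominator for $H^K$, and Lemma~\ref{BBRspLemma} concludes that $H^K$ is integral.

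The only place where care is needed is the Noetherian reduction from the infinite generating set $C$ to a finite one; everything else is routine. The argument uses the abelianness of $H^K$ twice: once to convert group generation into $\Z$-module generation, and once more to satisfy the product-of-powers hypothesis of Proposition~\ref{CharPolyCrit}.
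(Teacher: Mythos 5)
Your proof is correct and follows essentially the same route as the paper: pass to finitely many $K$-conjugates of the $h_i$ generating the abelian group $H^K$, note these are integral, and apply Proposition~\ref{CharPolyCrit} (the paper phrases the finiteness step via conjugates $x_i^{y_i}$ of elements of $H$, while you conjugate the generators directly and spell out the Noetherian reduction). The only cosmetic difference is that the paper first records that $H$ itself is integral, which your argument bypasses.
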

\begin{proof}
If the $h_i$ are integral then Proposition~\ref{CharPolyCrit}
guarantees that $H$ is integral. Since $H^K$ is finitely
generated, there are $x_1,\ldots, x_t \in H$ and $y_1, \ldots ,
y_t \in K$ such that $H^K = \langle x_{i}^{y_i} : 1\leq i \leq
t\rangle$. By Proposition~\ref{CharPolyCrit} again, $H^K$ is
integral.
\end{proof}
\begin{lemma}\label{VeryStraightforward}
Suppose that $H$ is SF and $Y$ is a normal generating set for
$H_p$ ($p\neq 2$), i.e., $H_p=\abk \langle Y\rangle^H$. Then $H$
is integral if and only if each element of $\hspace*{.5pt} Y$ is
integral.
\end{lemma}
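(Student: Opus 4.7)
The forward direction is immediate: if $H^g \leq \GL(n,\Z)$ for some $g \in \GL(n,\Q)$, then $y^g \in \GL(n,\Z)$ for every $y \in Y \subseteq H_p \leq H$, so each $y$ is integral. The content is in the converse. My plan is to push everything down to the block diagonal using the projection $\pi$, where $H_p$ becomes abelian, and then invoke Lemma~\ref{NormGenIntegralSuffices}.

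After conjugating $H$ inside $\GL(n,\Q)$ (which preserves integrality of $H$ and of each element of $Y$), I may assume $H$ is in block triangular form with completely reducible diagonal blocks, with projection $\pi$ to the diagonal. By Lemma~\ref{VariousEqu}, it suffices to prove $\pi(H_p)$ is integral. Because $H_p$ is unipotent-by-abelian and $U(H) \leq \ker \pi$, the group $\pi(H_p)$ is abelian. Also $|H:H_p| \leq |\GL(n,p)| < \infty$, so $H_p$ (and hence $\pi(H_p)$) is finitely generated. Since $\pi$ is a homomorphism,
\[
\pi(H_p) \ = \ \pi\bigl(\langle Y\rangle^H\bigr) \ = \ \langle \pi(Y)\rangle^{\pi(H)},
\]
so $\pi(H_p)$ is a finitely generated abelian group arising as the normal closure in $\pi(H)$ of the subgroup $\langle \pi(Y)\rangle$. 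This is exactly the situation of Lemma~\ref{NormGenIntegralSuffices}.

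To apply that lemma I need each $\pi(y)$, $y \in Y$, to be integral in the sense of Proposition~\ref{CharPolyCrit}. But $y$ is block upper triangular, so $\chi_{\pi(y)}(X) = \chi_y(X)$ and $\det \pi(y) = \det y$; since $y$ is integral by hypothesis, $\chi_y(X) \in \Z[X]$ and $\det y = \pm 1$, giving the same for $\pi(y)$. Lemma~\ref{NormGenIntegralSuffices} then yields integrality of $\pi(H_p)$, and Lemma~\ref{VariousEqu} translates this back to integrality of $H$. There is no real obstacle; the only point requiring care is verifying that conjugating $H$ into block triangular form does not disturb the hypothesis (each $y^g$ is integral iff $y$ is), and that $\pi(Y)$ normally generates $\pi(H_p)$ in $\pi(H)$ — both of which are routine.
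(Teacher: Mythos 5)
Your proof is correct and follows essentially the same route as the paper's: push to the block diagonal via $\pi$, observe that integrality of each $y\in Y$ gives integrality of each $\pi(y)$, apply Lemma~\ref{NormGenIntegralSuffices} to the finitely generated abelian group $\pi(H_p)=\langle\pi(Y)\rangle^{\pi(H)}$, and conclude with Lemma~\ref{VariousEqu}. You merely supply a few details the paper leaves implicit (why $\pi(y)$ is integral, why $\pi(H_p)$ is finitely generated abelian), all of which are accurate.
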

\begin{proof}
If each element of $Y$ is integral then the same holds for
$\pi(Y)$. Hence the finitely generated abelian group $\pi(H_p) =
\langle \pi(Y)\rangle^{\pi(H)}$ is integral by
Lemma~\ref{NormGenIntegralSuffices}. The claim now follows from
Lemma~\ref{VariousEqu}.
\end{proof}
We compute $Y$ from a presentation $\mathcal P$ of
$\varphi_p(H)\leq \glnp$ on $\varphi_p(h_1), \ldots ,
\varphi_p(h_r)$ using the function ${\tt NormalGenerators}$ as in
\cite[Section 3.2]{Tits}: this evaluates the relators of $\mathcal
P$, replacing $\varphi_p(h_i)$ everywhere by $h_i$, $1\leq i\leq
r$. Proposition~\ref{CharPolyCrit} and
Lemma~\ref{VeryStraightforward} consequently provide our
straightforward procedure to test integrality of $H$.

\vspace*{15pt}

${\tt IsIntegralSF} (S)$

\vspace*{1mm}

Input: a finite subset $S$ of $\GL(n,\Q)$ such that $H=\gpess$ is
SF.

Output: ${\tt true}$ if $H$ is integral; ${\tt false}$ otherwise.

\vspace*{1mm}

\begin{enumerate}

\item $Y:= {\tt NormalGenerators}\hspace{.5pt} (S)$.

\item If every element of $Y$ is integral then return ${\tt
true}$;

else return ${\tt false}$.
\end{enumerate}

\vspace*{10pt}

\begin{remark}
When $H$ is finite we have $H_p=1$, so ${\tt IsIntegralSF}$ always
returns ${\tt true}$ for such input.
\end{remark}

A major class of SF groups is PF (polycyclic-by-finite) groups.
According to \cite[Corollary~10]{AssmanEickII}, one may test
integrality of a PF subgroup $H$ of $\glnq$ after computing a
polycyclic sequence and transversal for a finite index polycyclic
normal subgroup of $H$. By contrast, ${\tt IsIntegralSF}$ does not
require $H$ to be PF and just tests integrality of several
matrices in $H_p$.

To conjugate an integral SF group $H$ into $\GL(n,\Z)$, or to
compute generators of its finite index subgroup $H_\Z$ via ${\tt
IntegralIntercept}$, we must know $d(H)$. The method below to find
this common denominator is a simplification of \cite[p.~120]{BBR}
for SF input.

First determine a  block upper triangular conjugate of $H$ with
completely reducible diagonal blocks; this may be done as in
\cite[Section~4.2]{SF}. Let $\{ a_1, \ldots , a_m \}\subseteq
\pi(H)$ be a basis of the enveloping algebra $\langle
\pi(H)\rangle_{\Q}$. If $c$ is a common multiple of the
denominators of entries in the $a_i$ then $d_1 :=  c\,
\mathrm{det}\big([\mathrm{tr}(a_ia_j)]_{1\leq i,j\leq n}\big) =
\abk d(\pi(H))$. Define $u_i = \abk h_i - \pi(h_i)$ and $v_i =
\abk h_i^{-1} - \pi(h_i^{-1})$. Let $d_2 = \abk e^{n-1}$ where $e$
is a common multiple of denominators of entries in the $u_i$ and
$v_i$. Each element of $H$ is a sum of terms $x= \abk \pi(g_1)
w_1\cdots \pi(g_k)w_k$ where $g_j\in H$ and $w_j=1$ or some $u_i$
or $v_i$. Since $\pi(h)u_j$ and $\pi(h)v_j$ are nilpotent, if
there are at least $n$ occurrences of $u_j$s and $v_j$s in $x$
then $x=\abk 0$. Thus $dH\subseteq \abk \mathrm{Mat}(n,\Z)$ where
$d= \abk d_1^{n} d_2$. Note that for completely reducible $H$
(e.g., $H$ is finite), $d(H) =d(\pi(H)) = d_1$.

\section{Arithmeticity testing in solvable algebraic groups}
\label{ArithTest}

In this section we apply results of \cite{SF} and the previous
sections to test whether a finitely generated subgroup $H\leq
G_\Q$ of a solvable algebraic group $G$ is arithmetic.

Denote the Hirsch number of a group $K$ with finite torsion-free
rank by $\hi(K)$. Finitely generated SF subgroups of $\GL(n, \Q)$
have finite torsion-free rank \cite[Proposition~2.6]{SF}.
\begin{lemma}\label{Step31}
Let  $L$ be a finitely generated SF subgroup of $\GL(n,\Q)$, and
let $K\leq L$. Then $K\fis L$ if and only if $\, \hi(K) = \hi(L)$.
\end{lemma}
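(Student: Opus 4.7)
The forward direction is the standard fact that Hirsch length is an invariant of commensurability: if $K\fis L$ then any finite subnormal-series refinement adapted to $K$ shows that $K$ and $L$ have the same number of infinite cyclic factors. So I would dispose of this direction in one sentence, citing the relevant basic property of $\hi$ (for instance from the reference already used for $\hi$ being well defined on finitely generated SF subgroups of $\glnq$).

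The substantive direction is the converse, and my plan is to reduce to the polycyclic case. First I would observe that $L$ is in fact polycyclic-by-finite: by Mal'cev's theorem a finitely generated solvable subgroup of $\glnq$ is polycyclic, so the finite-index solvable normal subgroup of $L$ furnished by the SF hypothesis is polycyclic. Call this subgroup $L_0$, and set $K_0 = K\cap L_0$. Then $K_0$ has finite index in $K$ (bounded above by $|L:L_0|$), and $K_0 \leq L_0$ is a subgroup of a polycyclic group, hence polycyclic.

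The forward direction already proved gives $\hi(K_0)=\hi(K)$ and $\hi(L_0)=\hi(L)$, so under the hypothesis $\hi(K) = \hi(L)$ we have $\hi(K_0) = \hi(L_0)$. Now I would invoke the well-known fact for polycyclic groups: if $A\leq B$ are polycyclic and $\hi(A)=\hi(B)$ then $|B:A| < \infty$. This can be seen by refining a polycyclic series of $B$ through $A$ and comparing infinite cyclic factors on either side. Applying this to $K_0\leq L_0$ yields $K_0 \fis L_0$, and chaining finite indices $K_0\fis L_0 \fis L$ gives $K_0 \fis L$; since $K_0 \leq K \leq L$, we conclude $K\fis L$.

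The only real obstacle is verifying the reduction lemma for polycyclic groups, which is standard but worth isolating; everything else is bookkeeping. No new computation with matrices or with $\glnq$ itself is required, since the linear setting enters only through Mal'cev's theorem to ensure the polycyclic-by-finite structure of $L$.
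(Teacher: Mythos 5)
Your forward direction is fine, but the converse rests on a false premise. Mal'cev's theorem says that solvable subgroups of $\glnz$ are polycyclic; it does \emph{not} say that finitely generated solvable subgroups of $\glnq$ are polycyclic, and that statement is false. For example, the subgroup of $\GL(2,\Q)$ generated by $\mathrm{diag}(2,1)$ and the elementary unipotent matrix $I+e_{1,2}$ is finitely generated and metabelian but isomorphic to $\Z[1/2]\rtimes\Z$, which is not polycyclic (it contains the non-finitely-generated subgroup $\Z[1/2]$). So your $L_0$ need not be polycyclic, the reduction to the ``refine a polycyclic series'' lemma collapses, and the substantive direction is left unproved exactly in the cases the lemma is stated to cover. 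Relatedly, the Hirsch number $\hi$ in this paper is the torsion-free rank of a group of finite rank (the relevant series has factors that are torsion or torsion-free abelian of finite rank, not necessarily cyclic), precisely because finitely generated SF subgroups of $\glnq$ need not be polycyclic-by-finite; this is the content of \cite[Proposition~2.6]{SF} quoted just above the lemma.

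The paper does not prove the lemma from scratch: it quotes Proposition~2.3 and Corollary~3.3 of \cite{SF}, which establish the equivalence $K\fis L \Leftrightarrow \hi(K)=\hi(L)$ for finitely generated SF linear groups of finite torsion-free rank; the nontrivial point there is that in such an $L$ a subgroup of full torsion-free rank automatically has finite index (one has to rule out phenomena like $\Z\leq\Z[1/2]$, which is where finite generation of $L$ and the linear/rank structure enter). Your argument can be salvaged when $L$ is integral, since a conjugate then lies in $\glnz$ and Mal'cev genuinely applies --- and, as it happens, the paper only ever invokes the lemma with $K=H_\Z$ and $L=G_\Z$, where everything is polycyclic-by-finite. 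But as a proof of the lemma \emph{as stated}, for arbitrary finitely generated SF subgroups of $\glnq$, the proposal has a genuine gap: either restrict the hypothesis to integral $L$ or supply (or cite) the finite-rank argument.
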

\begin{proof}
This follows from Proposition~2.3 and Corollary~3.3 of \cite{SF}.
\end{proof}

\begin{corollary}\label{Step32}
$H$ is an arithmetic subgroup of $G$ if and only if $H$ is
integral and $\hi(H) = \hi(G_\Z)$. In particular, $H\leq G_\Z$ is
arithmetic if and only if $\, \hi(H) =\abk \hi(G_\Z)$.
\end{corollary}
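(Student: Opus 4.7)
The plan is to deduce both implications from Proposition~\ref{FinIndexZpointsCriterion} (integrality $\Leftrightarrow$ $H_\Z \fis H$) together with Lemma~\ref{Step31} applied inside the finitely generated SF group $G_\Z$. Throughout, recall that $H \leq G_\Q$ is solvable, hence SF, and that $G_\Z$ is a solvable arithmetic group, hence polycyclic and in particular finitely generated SF; a finite index generating set for $G_\Z$ can in fact be obtained from ${\tt GeneratingArithmetic}(G)$ together with coset representatives, but the Hirsch number $\hi(G_\Z)$ is all we need.

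For the forward direction, suppose $H$ is arithmetic. Then by definition $H_\Z = H \cap G_\Z$ has finite index in both $H$ and $G_\Z$. From $H_\Z \fis H$ and Proposition~\ref{FinIndexZpointsCriterion}, $H$ is integral. Since passing to a finite index subgroup leaves the Hirsch number unchanged,
\[
\hi(H) = \hi(H_\Z) = \hi(G_\Z).
\]

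For the converse, assume $H$ is integral and $\hi(H) = \hi(G_\Z)$. Proposition~\ref{FinIndexZpointsCriterion} gives $H_\Z \fis H$, so $\hi(H_\Z) = \hi(H) = \hi(G_\Z)$. Now apply Lemma~\ref{Step31} with $L = G_\Z$ and $K = H_\Z$: since $G_\Z$ is a finitely generated SF group containing $H_\Z$ and the two Hirsch numbers coincide, $H_\Z \fis G_\Z$. Combined with $H_\Z \fis H$, this says precisely that $H$ is commensurable with $G_\Z$, i.e., $H$ is arithmetic. The ``in particular'' clause is immediate: any $H \leq G_\Z$ is integral (it is already contained in $\GL(n,\Z)$), so only the Hirsch number equality must be checked.

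The only subtle point, and the main obstacle to applying the stated machinery cleanly, is justifying that $L = G_\Z$ itself (not just the finite index subgroup produced in Section~\ref{Willem}) qualifies as a finitely generated SF group in Lemma~\ref{Step31}. This is handled by invoking the standard fact that arithmetic subgroups of solvable $\Q$-groups are polycyclic; alternatively, one can argue entirely inside the subgroup $A$ of $G_\Z$ returned by ${\tt GeneratingArithmetic}(G)$, since commensurability with $A$ is equivalent to commensurability with $G_\Z$ and $\hi(A) = \hi(G_\Z)$.
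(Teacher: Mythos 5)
Your proposal is correct and follows essentially the same route as the paper: the forward direction from the definition of arithmeticity plus Proposition~\ref{FinIndexZpointsCriterion}, and the converse by applying Lemma~\ref{Step31} with $L = G_\Z$ and $K = H_\Z$, using invariance of the Hirsch number under passage to finite index subgroups. The paper leaves these details (including the fact that $G_\Z$ is finitely generated SF, which you rightly flag and resolve via polycyclicity of arithmetic subgroups of solvable $\Q$-groups) implicit in a one-line proof, so your write-up is simply a fuller version of the same argument.
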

\begin{proof}
%
This is immediate from Proposition~\ref{FinIndexZpointsCriterion}
and Lemma~\ref{Step31}, using that $\hi(H) = \hi(H_\Z)$ when
$H_\Z\fis \abk H$.
\end{proof}

\begin{remark}
If $G$ is a unipotent $\Q$-group then we have a more general
statement \cite[Lemma 3.7]{SF}: $H\leq G_\Q$ is arithmetic in $G$
if and only if $\hi(H ) = \hi(G_\Q)$, i.e., $\hi(H)$ equals the
dimension of $G$. For non-unipotent solvable (even abelian) $G$
this is not true; $G_\Q$ need not even have finite rank.
\end{remark}

We now state our arithmeticity testing algorithm.
This uses the procedure ${\tt HirschNumber}$ from \cite[Section
4.4]{SF}, which returns $\hi(K)$ for a finitely generated SF
subgroup $K$ of $\glnq$.

\vspace*{15pt}

${\tt IsArithmeticSolvable}\hspace{.25pt} (S, G)$

\vspace*{1mm}

Input: a finite subset $S$ of $G_\Q$,  $G$ a solvable algebraic
group.

Output: ${\tt true}$ if $H=\gpess$ is arithmetic; ${\tt false}$
otherwise.

\vspace*{1mm}

\begin{enumerate}

\item If ${\tt IsIntegralSF}\hspace{.25pt} (S) = {\tt false}$ then
return ${\tt false}$.

\item $T := {\tt GeneratingArithmetic}\hspace{.25pt} (G)$.

\item If ${\tt HirschNumber}\hspace{.25pt} (S)\neq {\tt
HirschNumber}\hspace{.25pt} (T)$ then return ${\tt false}$;

else return ${\tt true}$.
\end{enumerate}

\vspace*{10pt}

\begin{remark}
Steps (1) and (3) are justified by Corollary~\ref{Step32}. If $G$
is unipotent then $H$ is integral \cite[Lemma 2, p.~111]{Segal},
and step (1) can be omitted.
\end{remark}

\section{Practical performance}\label{ImplDisc}

We have implemented our algorithms in {\sc Magma}. The
implementation relies on the package `Infinite' \cite{Infinite}
and procedures available at \cite{deGraafPrograms}. Although the
main goal has been to establish that arithmeticity is decidable,
in this section we show that our algorithms can be applied in
practice to nontrivial examples.

One of the main bottlenecks of ${\tt GeneratingArithmetic}$
lies in the computation of the torus part. Let $\g=\mf{d}\oplus
\mf{n}$ and $D$ be as in Proposition \ref{thm:solv}. The first
step of the algorithm given in \cite{deGraafII} for computing
generators of $D_\Z$ constructs the associative algebra $A$ with
unity generated by $\mf{d}$. Subsequently $A$ is written as a
direct sum of number fields $\Q(\alpha)$. For each such field,
generators of the unit group of $\Z[\alpha]$ are computed. But the
algorithm for this task (as implemented in {\sc Magma}) becomes
extremely difficult to apply in practice if the degree of
$\Q(\alpha)$ is too large, say $30$ or more. On the other hand, if
all fields that occur are equal to $\Q$ then the computation of
generators of $D_\Z$ is trivial. For these reasons we constructed
test examples where the field extensions have degree $\leq 2$ (and
some extensions of degree $2$ do occur). This construction works
as follows. First we define a Lie algebra $\g(n) \subset
\gl(2n,\C)$, $n\geq 2$. For this we divide the matrices of
$\gl(2n,\C)$ into $2\times 2$ blocks. Our Lie algebra $\g(n)$ is
the direct sum $\g(n) = \mathfrak{t}\oplus \mathfrak{n}$ of two
subalgebras. The subalgebra $\mathfrak{t}$ has dimension $n$, and
the $i$th basis element has on its $i$th diagonal block the matrix
\[
\begin{pmatrix} 0 & 1 \\ 2i-1 & 0 \end{pmatrix}
\]
and zeros elsewhere.

Let $e_{i,j}$ be the elementary matrix with $1$ in position
$(i,j)$ and zeros elsewhere. Then $\mathfrak{n}$ is spanned by the
$e_{i,j}$ where $(i,j)$ appears in a block above the diagonal. So
$\dim \mathfrak{n} = 4 \binom{n}{2}$.

The Lie algebra $\g(n)$ is algebraic, and we let $G(n)< \GL(2n,\C)$
be the connected algebraic group with Lie algebra $\g(n)$.

Now let $m$ be a $2n\times 2n$ matrix produced by the following
randomized construction. The entries of $m$ are randomly and
uniformly chosen from $[0,0,1]$ (so we make it twice as likely
that a $0$ is chosen). We continue to produce matrices like this
until the determinant is not $0$ or $\pm 1$.

Set $\widetilde{G}(n) = mG(n)m^{-1}$. This is an algebraic group
with Lie algebra $m\g(n)m^{-1}$. The Lie algebra has basis $B_n$
consisting of all $mum^{-1}$, for $u$ in the above constructed
basis of $\g(n)$. Let $g_1,\ldots,g_r$ be generators of an
arithmetic subgroup of $G(n)_\Z$. For $1\leq i\leq r$ let $k_i$ be
chosen randomly and uniformly from $\{1,2\}$. Then $S = \{
(mg_im^{-1})^{k_i} \mid 1\leq i\leq r\}$ generates a subgroup of
$\widetilde{G}(n)_\Q$ (and note that it always is arithmetic). The
input on which we tested our implementation of ${\tt
IsArithmeticSolvable}$ is the set $S$, together with the algebraic
group $\widetilde{G}(n)$ given by its Lie algebra, in turn given
by its basis $B_n$.

In Table~\ref{tab:1} we report on the running times of our
algorithm with the input as above for $n=2$, $3$, $4$. All
experiments were performed on a 3.16~GHz machine running Magma
V2.19-9.
%
%
The first three columns of Table \ref{tab:1} list $n$, $\dim
\g(n)$, and the Hirsch number of $\widetilde{G}(n)_\Z$. The other
columns list running times of ${\tt IsIntegralSF}(S)$, $T:={\tt
GeneratingArithmetic}(\widetilde{G}(n))$, ${\tt HirschNumber}(S)$
and ${\tt HirschNumber}(T)$.

From Table~\ref{tab:1} we see that our algorithm is efficient
enough to handle quite nontrivial examples. Moreover, the bulk of
the running time goes into computing Hirsch numbers. Of course,
this depends on our particular test example. If we took examples
where it is difficult to compute generators of $D_\Z$, then a much
larger proportion of the time would go into computing an
arithmetic subgroup. However, the current implementation is rather
sensitive to randomness of the input. Occasionally it happens that
generators in $T$
%
have coefficients with many digits (up to $10$, for example). This
then causes problems when computing the Hirsch number. So we have
averaged times over $50$ runs in order to dampen the effects of
randomness of the input.


\bigskip

{\renewcommand{\arraystretch}{1.2}
\begin{table}[h]\caption{Running times (in seconds)
of the steps in  {$\tt IsArithmeticSolvable$}}\label{tab:1}
\begin{center}
\begin{tabular}{|c|c|c|c|c|c|c|}\hline
$n$  & $\mathrm{dim}\, \mathfrak{g}(n)$ &
$\hi(\widetilde{G}(n)_\Z)$ & ${\tt IsInt}(S)$ & $T:={\tt
GA}(\widetilde{G}(n))$ & $\hi(\langle S \rangle)$ &
$\hi(\langle T\rangle)$ \\
\hline \hline
2 & 6 & 5 & 0.12 & 0.05 & 0.51 & 0.68 \\
3 & 15 & 14 & 0.29  & 0.19 & 2.10 & 2.28 \\
4 & 28 & 27 & 0.86 & 1.12 & 11.28 & 12.77\\ \hline
\end{tabular}
\end{center}
\end{table}

\vspace{20pt}

\subsubsection*{Acknowledgment}
The authors were supported by Science Foundation Ireland grant
11/RFP.1/\abk~MTH3212.

\bibliographystyle{amsplain}

\end{document}